\documentclass[a4paper,11pt]{amsart}

\usepackage{amsmath,amssymb,amsthm,mathtools}
\usepackage{float}
\usepackage{color}
\usepackage[colorlinks=true,
            linkcolor=blue,
            citecolor=blue,
            urlcolor=blue,
            filecolor=blue,
            menucolor=blue,
            runcolor=blue]{hyperref}
\usepackage{enumitem}
\usepackage{diagbox}
\usepackage{makecell}
\allowdisplaybreaks

\setlength{\parindent}{2em}
\setlength{\parskip}{3pt plus1pt minus2pt}
\setlength{\baselineskip}{20pt plus2pt minus1pt}
\setlength{\textheight}{21.5 true cm}
\setlength{\textwidth}{14.5true cm}
\setlength{\headsep}{10truemm}
\addtolength{\hoffset}{-12mm}

\numberwithin{equation}{section}
\theoremstyle{plain}
\newtheorem{thm}{Theorem}[section]

\newtheorem{lem}[thm]{Lemma}

\newtheorem{rem}{Remark}

\makeatletter

\allowdisplaybreaks[3]

\makeatother

\title[Minimal Willmore Hypersurfaces with Constant Scalar Curvature]
{Chern conjecture on Minimal Willmore Hypersurfaces with Constant Scalar Curvature}

\author[J. Q. Ge]{Jianquan Ge}
\address{$^{}$School of Mathematical Sciences, Laboratory of Mathematics and Complex Systems, Beijing Normal University, Beijing 100875, P. R. CHINA.}
\email{jqge@bnu.edu.cn}

\author[H. X. Tan]{Huixin Tan}
\address{$^{}$School of Mathematical Sciences, Laboratory of Mathematics and Complex Systems, Beijing Normal University, Beijing 100875, P. R. CHINA.}
\email{hxtan@mail.bnu.edu.cn}

\author[W. J. Yan]{Wenjiao Yan}
\address{$^{}$School of Mathematical Sciences, Laboratory of Mathematics and Complex Systems, Beijing Normal University, Beijing 100875, P. R. CHINA.}
\email{wjyan@bnu.edu.cn}

\author[Y. H. Zhang]{Yunheng Zhang$^{*}$}
\address{$^{*}$School of Mathematical Sciences, Laboratory of Mathematics and Complex Systems, Beijing Normal University, Beijing 100875, P. R. CHINA.}
\email{yunheng@mail.bnu.edu.cn}

\subjclass[2020]{53C20, 53C24, 53C40}
\keywords{minimal Willmore hypersurfaces, constant scalar curvature, Chern conjecture.}
\thanks{$^{*}$ the corresponding author.}
\thanks{The project is partially supported by NSFC (No. 12171037, 12271038, 12571049) and the Fundamental Research Funds for the Central Universities.}

\begin{document}

\begin{abstract}
	In this paper, we prove that for an $n$-dimensional closed minimal Willmore hypersurface $M^n$ with constant scalar curvature in the unit sphere $\mathbb{S}^{n+1}$, the squared norm $S$ of the second fundamental form of $M^n$ satisfies $S\geqslant n+\frac{4n+9-\sqrt{4 n^{2}+60 n+81}}{2}$ if $S>n$. This proves, in the approximate sense, the Chern conjecture about the second gap ($S\geqslant 2n$ if $S>n$), which will be fully verified under a further inequality condition about the 4-th mean curvature. 
\end{abstract}

\maketitle

\section{Introduction}

The renowned Chern conjecture was initially proposed by Chern \cite{Chern,CDK} in 1968 and 1970, and later included as the 105th problem in Yau's Problem Section \cite{Yau} in 1982. It can be
stated as follows.

\vspace{2mm}
\noindent
\textbf{Chern conjecture (Original version).}
\textit{
	Let $M^n$ be a closed, minimally immersed hypersurface of the unit
	sphere $\mathbb{S}^{n+1}$ with constant scalar curvature $R_M$. Then for each $n$, the set of all possible values of $R_M$ is discrete.
}
\vspace{2mm}

In 1968, J. Simons \cite{Sim} proved the following fundamental result, motivating the proposition of the Chern conjecture:

\vspace{2mm}
\noindent
\textbf{Simons theorem.}
\textit{
	Let $M^n$ be a closed, minimally immersed hypersurface of the unit sphere
	$\mathbb{S}^{n+1}$ . Then
	\begin{equation*}
		\int_{M}S(S-n)\geqslant 0,
	\end{equation*}
	where $S$ denotes the squared norm of the second fundamental form of $M^n$. In particular, if $0\leqslant S\leqslant n$, one has either $S\equiv0$ or $S\equiv n$ on $M^n$.
}
\vspace{2mm}

The classification of $S\equiv n$ in the Simons theorem was characterized by Chern-do Carmo-Kobayashi \cite{CDK} and Lawson \cite{Lawson} independently: The Clifford tori $\mathbb{S}^{k}\Big(\sqrt\frac{k}{n}\Big)\times \mathbb{S}^{n-k}\Big(\sqrt\frac{n-k}{n}\Big)$, $1\leqslant k\leqslant n-1$, are the only closed minimal hypersurfaces in $\mathbb{S}^{n+1}$ satisfying $S\equiv n$. Note that for minimal hypersurfaces in $\mathbb{S}^{n+1}$, the scalar curvature $R_{M}$ satisfies $S=n(n-1)-R_{M}$. Hence, the Simons theorem established the first gap of the original Chern conjecture. 
Furthermore, motivated by the fact that
the isoparametric hypersurfaces are the only known examples of minimal hypersurfaces with
constant scalar curvature in $\mathbb{S}^{n+1}$,
the original Chern conjecture has been reformulated into the following stronger version (cf. \cite{Verstraelen}).

\vspace{2mm}
\noindent
\textbf{Chern conjecture (Stronger version).}
\textit{
	Let $M^n$ be a closed, minimally immersed hypersurface of the unit
	sphere $\mathbb{S}^{n+1}$ with constant scalar curvature. Then $M^n$ is isoparametric.
}
\vspace{2mm}

Recall that a hypersurface in $\mathbb{S}^{n+1}$ is called isoparametric if all its principal curvature functions are constant.
In 1983, Peng-Terng pointed out that if $M^{n}$ is a compact minimal isoparametric hypersurface in $\mathbb{S}^{n+1}$ with $g$ distinct principal curvatures, then $S = (g-1)n$ (due to the well-known work of Münzner \cite{Munzner-1980,Munzner-1981}, $g$ can only be $1$, $2$, $3$, $4$, or $6$).
Moreover, Peng-Terng \cite{Peng-T1} initiated the study of the second pinching problem
for $S$ of minimal hypersurfaces in $\mathbb{S}^{n+1}$, and made the first breakthrough:

\vspace{2mm}
\noindent
\textbf{Peng-Terng theorem.}
\textit{
	Let $M^n$ be a closed, minimally immersed hypersurface in the unit
	sphere $\mathbb{S}^{n+1}$ with constant scalar curvature. If $S>n$, then $S>n+\frac{1}{12n}$.
}
\vspace{2mm}

In 1998, Yang-Cheng \cite{1} improved the second gap $\frac{1}{12n}$ to $\frac{n}{3}$, and their precise estimates and elegant techniques have had a profound influence and will play a key role in our proof. In 2007, by introducing new parameters, Suh-Yang \cite{2007} further improved it to $\frac{3n}{7}$. It is also known as Chern conjecture that the ideal second gap is $n$, namely, $S\geqslant 2n$ if $S>n$ (cf. \cite{Cheng-Wei}).

Denote by $f_k$ $(1\leqslant k\leqslant n)$ the sum of $k$-th power of the principal curvatures. Under the additional assumption that $f_3$ is constant, the second gap of $S$ can be improved to $\frac{2n}{3}$ by Yang-Cheng \cite{1}. Recently, Cheng-Wei-Yamashiro \cite{2} made a significant improvement:

\vspace{2mm}
\noindent
\textbf{Theorem (\cite{2}).}
\textit{
	Let $M^n\;(n\geqslant 5)$ be a complete minimal hypersurface in $\mathbb{S}^{n+1}$ with constant scalar curvature. If $f_3$ is constant and
	$S>n$, then
	\begin{equation*}
		S>1.8252n-0.712898.
	\end{equation*} 
}

Considering the stronger version of the Chern conjecture, a number of significant partial results have been established over the past decades. In 1993, building on the seminal works of Cartan, Simons, Chern-do Carmo-Kobayashi \cite{CDK}, and Peng-Terng \cite{Peng-T1}, Chang \cite{Chang} verified the stronger Chern conjecture in the case $n=3$. As a matter of fact, de Almeida and Brito \cite{A-B} proved that the stronger Chern conjecture holds for hypersurfaces $M^3 \subset \mathbb{S}^4$ with constant mean curvature and nonnegative constant scalar curvature. As emphasized in \cite{Scherfner-Weiss-Yau-2012}, the method developed in \cite{A-B} has since become a fundamental conceptual framework for a large body of subsequent research in this direction.
Recently, an important development was achieved by Tang-Wei-Yan \cite{T-W-Y} and Tang-Yan \cite{T-Y}, who completely generalized this result of \cite{A-B} to arbitrary dimensions. For further background, developments, and related results on the Chern conjecture, we refer the reader to the surveys \cite{Cheng-Wei, GQTY, GT, Scherfner-Weiss-Yau-2012}.


In \cite{DGW}, Deng-Gu-Wei proved that any closed minimal Willmore hypersurface $M^4\subset \mathbb{S}^{5}$
with constant scalar curvature must be isoparametric. In this paper, inspired by the methods of \cite{1}, we obtain the second gap of $S$ for closed minimal Willmore hypersurfaces in the unit sphere.
\begin{thm}\label{main}
    Let $M^n$ $(n\geqslant5)$ be an n-dimensional closed minimal Willmore hypersurface in $\mathbb{S}^{n+1}$ with constant scalar curvature. If $S>n$, then
    \[
        S\geqslant n+\frac{4n+9-\sqrt{4 n^{2}+60 n+81}}{2}=2n-T(n),
    \]
    where
    \[
        T(n) =
        \frac{\sqrt{4 n^{2}+60 n+81} - 2n - 9}{2}
        = 3-\frac{18}{n}+\frac{135}{n^{2}}-\frac{2349}{2n^{3}}+\frac{44955}{4n^{4}}+O\left(\frac{1}{n^{5}}\right).
    \]
\end{thm}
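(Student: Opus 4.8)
The plan is to run the Bochner-type integral machinery of Yang--Cheng \cite{1}, using the Willmore hypothesis to supply the extra pointwise relation that is unavailable in the general constant-scalar-curvature setting. I work in a local orthonormal frame diagonalizing the second fundamental form $A=(h_{ij})$, write $f_k=\sum_i\lambda_i^k$ (so $f_1=0$ by minimality and $f_2=S$), and use that $R_M$ constant is equivalent to $S$ constant. Two reductions come first. Since $S$ is constant, Simons' identity $\tfrac12\Delta S=|\nabla A|^2+S(n-S)$ degenerates to $|\nabla A|^2=S(S-n)$, a positive constant because $S>n$; in particular $A$ is nowhere parallel. Secondly, minimality together with the constancy of $S$ turns the Willmore condition into $f_3\equiv0$: for a normal variation $u\nu$ one has $\delta S=2\sum_{ij}h_{ij}u_{,ij}+2f_3\,u$, while $\int_M\sum_{ij}h_{ij}u_{,ij}\,dV=\int_M(\sum_{ij}h_{ij,ij})\,u\,dV=0$ because Codazzi and $f_1=0$ give $\sum_{ij}h_{ij,ij}=0$; hence the first variation of the Willmore functional $\int_M(S-nH^2)^{n/2}\,dV$ equals $nS^{n/2-1}\int_M f_3\,u\,dV$, and its vanishing for every $u$ forces $f_3\equiv0$. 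This yields the new constraints $f_3\equiv0$, and therefore $\nabla f_3\equiv0$ and $\Delta f_3\equiv0$.

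Next I would differentiate. Integrating the Bochner formula $\tfrac12\Delta|\nabla A|^2=|\nabla^2A|^2+(\text{cubic and quartic terms in }A,\nabla A)$ over the closed manifold gives an identity for $\int_M|\nabla^2A|^2$ whose principal right-hand terms are $\int_M f_4|\nabla A|^2$, $\int_M S|\nabla A|^2$ and the contraction $\int_M\sum_{ijklm}h_{ij}h_{kl}h_{ij,m}h_{kl,m}$. In parallel I expand $\Delta f_3=0$ and the standard formula for $\Delta f_4$, integrate, and use $\nabla S=0$ (that is $\sum_{ij}h_{ij}h_{ij,k}=0$) together with $\nabla f_3=0$ (that is $\sum_{ij}(h^2)_{ij}h_{ij,k}=0$) to eliminate the odd-weight traces. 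This is exactly where being Willmore enters: the relations $\nabla f_3=0$ and $\Delta f_3=0$ cancel the third-order obstruction that confines the unconditional estimate of \cite{2007} to the gap $\tfrac{3n}{7}$.

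For the estimation I may assume $n<S<2n$, since $S\geqslant2n$ already gives the conclusion. Under this pinching I control the quartic term by the sharp algebraic bound $f_4\leqslant\tfrac12 S^2$ available for trace-free tensors with $f_1=f_3=0$ (equivalently $\sigma_4\geqslant0$, where $\sigma_4$ is the fourth elementary symmetric function), and I invoke the refined Yang--Cheng lower bound for $|\nabla^2A|^2$ together with their estimate for the mixed contraction. Since $|\nabla A|^2=S(S-n)$ is constant, collecting powers of $S$ reduces everything to the single scalar inequality
\[
Q(S-n)\leqslant0,\qquad Q(x)=x^2-(4n+9)x+3n(n+1).
\]
As $0<S-n<n$ lies below the larger root of $Q$, this is equivalent to $S-n\geqslant\tfrac{4n+9-\sqrt{4n^2+60n+81}}{2}$, which is precisely $S\geqslant2n-T(n)$. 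Replacing the worst-case value $\tfrac12 S^2$ by the true bound on $f_4$ (the ``further inequality on the $4$-th mean curvature'') sharpens $Q$ so that its smaller root becomes $n$, recovering the conjectural $S\geqslant2n$.

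The main obstacle I expect lies in this estimation step: making the Willmore relations $\nabla f_3=0$ and $\Delta f_3=0$ interact cleanly with the Bochner terms, so that after integration by parts they annihilate the third-order contributions without generating new uncontrolled second-derivative quantities, and then tracking the sharp $n$-dependent constants through the several competing inequalities so that the emergent quadratic is exactly $Q$ rather than a weaker one. A secondary difficulty is the equality analysis: one must verify that the degeneracy configuration of the bound $f_4\leqslant\tfrac12 S^2$ is compatible with $|\nabla A|^2=S(S-n)>0$, so that strict positivity is preserved and the threshold $2n-T(n)$ is approached only in the limiting case.
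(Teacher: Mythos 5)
Your frame (Yang--Cheng machinery, $f_3\equiv0$ from the Willmore condition, $|\nabla A|^2=S(S-n)$, ending at the quadratic $Q(x)=x^2-(4n+9)x+3n(n+1)$) points at the right target --- indeed $Q$ is exactly the paper's final inequality $(8n+12)t^2-(10n+15)t+3(n+1)\leqslant 0$ rewritten in $x=S-n$ --- but the step that is supposed to \emph{produce} $Q$ is missing, and the ingredient you substitute for it is quantitatively too weak. Your control of the quartic term is the pointwise algebraic bound $f_4\leqslant\frac12 S^2$; this is in fact true under $f_1=f_3=0$ (if $M=\max_i|\lambda_i|$ is attained on the positive side, $M^3\leqslant\sum_{\lambda_i>0}\lambda_i^3=\sum_{\lambda_i<0}|\lambda_i|^3\leqslant M\sum_{\lambda_i<0}\lambda_i^2$ gives $M^2\leqslant\frac12 S$, whence $f_4\leqslant M^2S\leqslant\frac12 S^2$), but it is of order $S^2$, while the paper's proof hinges on an upper bound of order $S$: at a special point $x_0$, the quantity $f:=f_4-S^2/n$ satisfies $f\leqslant\frac{tS}{(1-t)(3-4t)}$, equivalently $f_4\leqslant\frac{3S}{3-4t}$ (Lemma 3.3). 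That bound requires three inputs absent from your plan: (i) a mean-value point, obtained by integrating $\Delta f_4=-4(S-n)f_4+4(2A+B)$ over the closed manifold, yielding a point $x_0$ where $2A+B=tSf_4$ \emph{exactly} (here $A=\sum\lambda_i^2h_{ijk}^2$, $B=\sum\lambda_i\lambda_jh_{ijk}^2$); (ii) the pointwise identity $A-2B=Sf_4-S^2$, from $0=\frac12\sum S_{ij}h_{jk}h_{ki}$ using $S$ constant and $f_3\equiv0$; (iii) the positivity $A+2B=\frac13\sum_{i,j,k}\bigl((\lambda_i+\lambda_j+\lambda_k)h_{ijk}\bigr)^2\geqslant0$. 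Solving (i)--(ii) pins down $A$ and $B$ at $x_0$, and (iii) then forces the order-$S$ bound on $f$. If instead you feed $f\approx\frac12 S^2$ into the paper's key inequality --- Yang--Cheng's lower bound on $\sum u_{ijkl}^2$ set against $\sum u_{ijkl}^2=S(S-n)(S-2n-3)+3(A-2B)-\frac32(Sf_4-2S^2+nS)$, whose left side is decreasing in $f$ --- the left side drops to order $-\frac{33}{2}S^2$ while the right side is about $-25tS^2$ for small $t$, so the inequality holds for all $t\lesssim 0.66$ and \emph{no contradiction, hence no gap, emerges}. Your sentence ``collecting powers of $S$ reduces everything to $Q(S-n)\leqslant0$'' is therefore asserted (reverse-engineered from the statement), not derived, and with your ingredients it is false.

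Two further structural points. First, you never introduce the symmetrized tensor $u_{ijkl}=\frac14(h_{ijkl}+h_{jkli}+h_{klij}+h_{lijk})$ and the identity $\sum h_{ijkl}^2=\sum u_{ijkl}^2+\frac32(Sf_4-f_3^2-2S^2+nS)$ (Lemma 3.1, where the Gauss equation makes all genuinely off-diagonal commutators $h_{ijkl}-h_{ijlk}$ vanish); this is precisely what connects the Simons-type computation of $\sum h_{ijkl}^2$ to the quantity that Yang--Cheng's lower bound actually controls, and a generic ``Bochner formula for $|\nabla^2A|^2$'' does not substitute for it. Second, your closing remark that replacing $\frac12S^2$ by ``the true bound on $f_4$'' sharpens $Q$ so its smaller root becomes $n$ inverts the actual mechanism of Theorem 1.2: the hypotheses there are \emph{lower} bounds on $f_4$ ($f_4\geqslant3S$ or $f_4\geqslant\frac{3S^2}{2n}$), which collide with the upper bound $f_4\leqslant\frac{3S}{3-4t}$ from Lemma 3.3 to force $t\geqslant\frac12$, i.e.\ $S\geqslant2n$, directly; no modification of $Q$ is involved. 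Your derivation of $f_3\equiv0$ from the Willmore condition and the consequent $\sum_i\lambda_ih_{ijk}^2=0$ are fine and agree with the paper, but the analytic core --- Lemmas 3.1--3.3 and the substitution of the exact $A,B$ at $x_0$ into the Yang--Cheng estimate --- is the proof, and it is absent.
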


\begin{rem}
    Notice that $T(n)$ reflects the approximation accuracy relative to $2n$. Besides, we also see that
    \[
        n+\frac{4n+9-\sqrt{4 n^{2}+60 n+81}}{2}>1.8252n-0.712898.
    \]
    Specifically, we compare the lower bounds of $S$ given by Theorem \ref{main} and \cite[Theorem 1.2]{2} for several values of $n$:
    \begin{table}[H]\centering
		\begin{tabular}{cccccccc}
            \hline
			$n$ & $5$ & $10$ & $25$ & $50$ & $100$ & $1000$ & $10000$ \\
			\Xhline{1pt}
			\textup{Theorem \ref{main}} & $8.534$ & $18.060$ & $47.558$ & $97.313$ & $197.167$ & $1997.017$ & $19997.001$ \\
			\textup{Theorem (\cite{2})} & $8.413$ & $17.539$ & $44.917$ & $90.547$ & $181.807$ & $1824.487$ & $18251.287$ \\
            \hline
		\end{tabular}
	\end{table}
\end{rem}

In the proof of Theorem \ref{main}, the upper bound estimate of $f_4$ at one point (in Lemma \ref{imp}) will be obtained. Consequently, if $f_4$ further has a proper lower bound, one can achieve the goal ``$S\geqslant 2n$".

\begin{thm}\label{f4}
	Let $M^n$ be an n-dimensional closed minimal Willmore hypersurface in $\mathbb{S}^{n+1}$ with constant scalar curvature. Suppose that $S>n$ and one of the following conditions are satisfied: $(1)~ f_{4}\geqslant3S$, $(2)~ f_{4}\geqslant\dfrac{3S^2}{2n}$. Then $S\geqslant2n$.
\end{thm}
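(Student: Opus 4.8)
The plan is to play the pointwise lower bound on $f_4$ coming from each hypothesis against the pointwise upper bound on $f_4$ furnished by Lemma~\ref{imp}, and to read off $S\geqslant 2n$ from the resulting quadratic inequality in the constant $S$. Since $M$ is minimal with constant scalar curvature, $S$ is constant, and the second-order Simons identities used to prove Theorem~\ref{main} yield, through Lemma~\ref{imp}, a point $p\in M$ at which $f_4$ is bounded above by an explicit function of $S$ and $n$. Reconstructing that bound from the way it drives Theorem~\ref{main} — there it is paired with the universal estimate $f_4\geqslant S^2/n$ (Cauchy--Schwarz applied to $\lambda_i^2$) to produce exactly $S^2-(6n+9)S+8n^2+12n\leqslant 0$, i.e.\ $S\geqslant 2n-T(n)$ — I expect Lemma~\ref{imp} to deliver a bound of the form
\begin{equation*}
    f_4(p)\leqslant B(S,n):=\frac{S^2}{n}-\frac13\bigl(S^2-(6n+9)S+8n^2+12n\bigr).
\end{equation*}
The role of the hypotheses (1) and (2) is to upgrade the weak universal input $f_4\geqslant S^2/n$ to something sharp enough to move the threshold from $2n-T(n)$ all the way to $2n$.

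Under hypothesis (1), the everywhere-valid bound $f_4\geqslant 3S$ holds at $p$, so $3S\leqslant B(S,n)$. Clearing denominators, this is equivalent to
\begin{equation*}
    \frac{n-3}{n}\,S^2-6nS+8n^2+12n\leqslant 0,
\end{equation*}
whose left-hand side factors as $\frac{n-3}{n}(S-2n)\bigl(S-\frac{2n(2n+3)}{n-3}\bigr)$. For $n\geqslant 5$ the leading coefficient is positive and the second root $\frac{2n(2n+3)}{n-3}$ exceeds $2n$, so the inequality localizes $S$ to the interval $[2n,\,\frac{2n(2n+3)}{n-3}]$; in particular $S\geqslant 2n$.

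Under hypothesis (2), the bound $f_4\geqslant \frac{3S^2}{2n}$ at $p$ gives $\frac{3S^2}{2n}\leqslant B(S,n)$, which collapses to
\begin{equation*}
    \frac{2n+3}{2n}(S-2n)(S-4n)\leqslant 0.
\end{equation*}
As the leading factor is positive, this forces $2n\leqslant S\leqslant 4n$, and again $S\geqslant 2n$.

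The genuine difficulty lies entirely upstream in Lemma~\ref{imp}, namely extracting the pointwise upper bound $B(S,n)$ for $f_4$ from the Willmore equation together with $\Delta S=0$ and the higher Simons identities; this is the analytic heart of the matter and is carried out en route to Theorem~\ref{main}. Granting that lemma, all that remains is the elementary quadratic algebra above, whose only delicate point is that the leading coefficient $\frac{n-3}{n}$ in case (1) be positive — guaranteed by the standing dimension assumption — together with the observation that the two thresholds $3S$ and $\frac{3S^2}{2n}$ were calibrated precisely so that the resulting quadratics both acquire $S=2n$ as the governing root, thereby confirming the conjectural second gap under either curvature hypothesis.
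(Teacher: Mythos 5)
Your overall strategy---evaluating the hypotheses' lower bound for $f_4$ at the point $x_0$ of Lemma~\ref{imp} against that lemma's upper bound---is exactly the paper's. But there is a genuine gap: the bound you attribute to Lemma~\ref{imp} is not what the lemma says, and your reconstruction rests on a false premise about how Theorem~\ref{main} is proved. The lemma actually gives $f=f_4-\frac{S^2}{n}\leqslant \frac{tS}{(1-t)(3-4t)}$ at $x_0$ for $0<t<\frac34$, which in terms of $S$ reads $f_4\leqslant \frac{3S^2}{4n-S}$ for $n<S<4n$: a rational bound with a pole at $S=4n$, not your quadratic $B(S,n)$. Your guiding assumption---that Theorem~\ref{main} arises by pairing the lemma with the universal estimate $f_4\geqslant S^2/n$---fails, because $\frac{3S^2}{4n-S}\geqslant \frac{S^2}{n}$ throughout $n<S<4n$, so that pairing yields nothing; the quadratic $S^2-(6n+9)S+8n^2+12n\leqslant 0$ in fact emerges from substituting the lemma's bound for $f$ into the separate Yang--Cheng-type inequality \eqref{ineq-f}, which involves both $f$ and $f^{-1}$. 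Consequently your $B(S,n)$ is \emph{not} implied by the true lemma in precisely the range that matters: for $n<S<2n-T(n)$ one has $B(S,n)<S^2/n\leqslant f_4$ (e.g.\ $B\bigl(\tfrac32 n,n\bigr)=\tfrac{11}{4}n-\tfrac{5}{12}n^2$, negative for large $n$, versus the true bound $\tfrac{27}{10}n$), so ``$f_4\leqslant B$'' is strictly stronger than anything established---indeed it would prove Theorem~\ref{main} outright with no hypotheses, a sign it is too strong. Your two quadratic factorizations are internally consistent with the guessed $B$, but they prove nothing since $B$ is unsubstantiated; the agreement $B(2n,n)=\frac{3(2n)^2}{4n-2n}=6n$ at $S=2n$ is what makes $2n$ appear as a root in your computations, but it does not validate the intermediate bound.

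The good news is that the correct bound rescues your comparison immediately, and this is the paper's one-line proof: assume $t<\frac12$ (i.e.\ $S<2n$, so the lemma applies); then
\begin{equation*}
    f_4 \leqslant \frac{tS}{(1-t)(3-4t)}+\frac{S}{1-t}=\frac{3S}{3-4t},
\end{equation*}
and for $t<\frac12$ this is $<3S$ (since $3-4t>1$), contradicting (1), and $<\frac{3S}{2(1-t)}=\frac{3S^2}{2n}$ (since $3-4t>2(1-t)$ iff $t<\frac12$), contradicting (2); hence $t\geqslant\frac12$ and $S=\frac{n}{1-t}\geqslant 2n$. Equivalently, in your formulation: with the true bound, each of $3S\leqslant\frac{3S^2}{4n-S}$ and $\frac{3S^2}{2n}\leqslant\frac{3S^2}{4n-S}$ is, for $S<4n$, simply equivalent to $S\geqslant 2n$---no quadratic factoring or dimension restriction $n\geqslant 5$ is needed (and indeed the paper states Theorem~\ref{f4} without one).
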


\begin{rem}
    Both inequality conditions (1) (2) in Theorem \ref{f4} are satisfied for all minimal isoparametric hypersurfaces in $\mathbb{S}^{n+1}$ with $g\geqslant3$. 
\end{rem}

\section{Preliminaries}
We consider an $n$-dimensional closed minimal hypersurface $M^n$ immersed in the unit sphere $\mathbb{S}^{n+1}$. 
To facilitate our discussion,
we choose a local orthonormal frame 
$\{e_1,\ldots,e_{n+1}\}$ on $\mathbb{S}^{n+1}$ such that $\{e_1,\ldots,e_{n}\}$ forms a tangent frame of $M^n$ and $e_{n+1}$ is the unit normal vector field of $M^n$. We assume the range of the indices as follows:
\begin{equation*}
	1\leqslant i,j,k,l,\ldots\leqslant n.
\end{equation*}
Let $\{\omega_1,\ldots,\omega_{n+1}\}$ denote the dual coframe, 
so that restricted to $M^n$, 
\[
    \omega_{n+1} = 0,
    \qquad
    d\omega_{n+1}
    = \sum_i \omega_{n+1,i}\wedge\omega_i = 0.
\]

By Cartan's lemma, one has
\[
    \omega_{n+1,i} = \sum_{j} h_{ij}\,\omega_j,\qquad h_{ij}=h_{ji},
\]
where $h_{ij}$ are the coeffients of the second fundamental form $h=\sum_{i,j}h_{ij}\omega_{i}\omega_{j}$ of $M^n$.
Moreover, the mean curvature $H$ and the squared norm of $h$ take the form
\[
    H=\frac{1}{n}\sum_{i}h_{ii},
    \qquad
    S = |h|^2 = \sum_{i,j} h_{ij}^2.
\]
Denote the covariant derivatives of the second fundamental form with respect to the Levi-Civita connection on $M^n$ by 
\[
    h_{ijk} = \nabla_k h_{ij}, 
    \qquad
    h_{ijkl} = \nabla_l\nabla_k h_{ij}.
\]
The structure equations then yield the Gauss equation, the Codazzi equation, and the Ricci formula:
\begin{align}
    R_{ijkl} 
      &= \delta_{ik}\delta_{jl} - \delta_{il}\delta_{jk}
         + h_{ik}h_{jl} - h_{il}h_{jk}, \label{equ-gauss} \\
    h_{ijk} &= h_{ikj}, \notag \\
    h_{ijkl} - h_{ijlk}
      &= \sum_m h_{im}R_{mjkl}
         + \sum_m h_{mj}R_{mikl}. \label{equ-ricci}
\end{align}

In what follows, we adopt a local orthonormal frame in which the second fundamental form is diagonal, namely $h_{ij}=\lambda_i\,\delta_{ij}$.
Using the assumption that $M^{n}$ is minimal, together with the Ricci formula, we derive the classical identity
\[
    \Delta h_{ij}
    = \sum_{k} h_{ijkk}
    = (n-S)h_{ij}.
\]

If $S$ is constant, then a direct computation yields
\[
    \sum_{i,j,k} h_{ijk}^{2}
    = S(S-n).
\]
Furthermore,
a more detailed computation shows that
\begin{equation}\label{ss}
    \sum_{i,j,k,l} h_{ijkl}^{2}
    = S(S-n)(S-2n-3) + 3(A-2B),
\end{equation}
where $A$ and $B$ are given explicitly by
$$A := \sum_{i,j,k,l,m} h_{ijk}\,h_{ijl}\,h_{km}\,h_{ml}
= \sum_{i,j,k}\lambda_i^{\,2}\,h_{ijk}^{2},$$
$$B := \sum_{i,j,k,l,m} h_{ijk}\,h_{ilm}\,h_{jl}\,h_{km}
= \sum_{i,j,k} \lambda_i \lambda_j\, h_{ijk}^{2}.$$
Define
\begin{align*}
    f_3 &= \sum_{i,j,k} h_{ij}h_{jk}h_{ki}
         = \sum_{i} \lambda_i^{3},\\
    f_4 &= \sum_{i,j,k,l} h_{ij}h_{jk}h_{kl}h_{li}
         = \sum_{i} \lambda_i^{4},
\end{align*}
which correspond to higher-order traces of the shape operator and are encountered naturally in subsequent estimates.

Next, for each collection of indices, define the fully symmetrized four-index symbol
\begin{equation}\label{uijkl}
    u_{ijkl}
    := \frac{1}{4}\bigl(h_{ijkl}+h_{jkli}+h_{klij}+h_{lijk}\bigr).
\end{equation}


\section{Proof of Theorem \ref{main}}
To prove Theorem \ref{main}, we first establish several lemmas.

\begin{lem}\label{new}
    Let $M^{n}$ be a closed minimal hypersurface in $\mathbb{S}^{n+1}$, then
    \begin{equation}\label{ssu}
        \sum_{i,j,k,l} h_{ijkl}^{2}
        = \sum_{i,j,k,l} u_{ijkl}^{2}
          + \frac{3}{2}\bigl(Sf_{4}-f_{3}^{2}-2S^{2}+nS\bigr).
    \end{equation}
\end{lem}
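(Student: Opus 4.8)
The plan is to compute the difference $\sum_{i,j,k,l} h_{ijkl}^2 - \sum_{i,j,k,l} u_{ijkl}^2$ directly from the definition \eqref{uijkl} of the symmetrized symbol, and then to identify the resulting expression with $\tfrac{3}{2}(Sf_4 - f_3^2 - 2S^2 + nS)$ using the already established formula \eqref{ss}. First I would expand $u_{ijkl}^2$. Writing $u_{ijkl} = \tfrac14(h_{ijkl}+h_{jkli}+h_{klij}+h_{lijk})$, squaring and summing over all four indices gives a sum of $16$ terms, each a contraction of two four-index third-covariant-derivative symbols. By relabeling the (fully summed) indices $i,j,k,l$, every one of these $16$ terms reduces to one of just two types: either $\sum h_{ijkl}^2$ (the ``diagonal'' pattern, where the two factors agree after a cyclic shift) or a ``mixed'' contraction of the form $\sum_{i,j,k,l} h_{ijkl}\,h_{jkli}$. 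A careful bookkeeping of which of the $16$ cyclic-shift pairings collapse to which type should yield
\[
    \sum_{i,j,k,l} u_{ijkl}^2
    = \frac{1}{4}\sum_{i,j,k,l} h_{ijkl}^2
    + \frac{3}{4}\sum_{i,j,k,l} h_{ijkl}\,h_{jkli},
\]
so that the problem is reduced to evaluating the single cross term $P := \sum_{i,j,k,l} h_{ijkl}\,h_{jkli}$.

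Next I would evaluate $P$ by commuting covariant derivatives. The key tool is the Ricci formula \eqref{equ-ricci} together with the Gauss equation \eqref{equ-gauss}: the difference $h_{ijkl} - h_{ijlk}$ (and more generally any reordering of the last pair, or a shift between the symmetric third-derivative patterns) is controlled by curvature terms $\sum_m h_{im}R_{mjkl} + \sum_m h_{mj}R_{mikl}$, which in the diagonalizing frame $h_{ij}=\lambda_i\delta_{ij}$ become explicit polynomials in the $\lambda_i$ and the $h_{ijk}$. Using the full symmetry $h_{ijk}=h_{ikj}$ of the first covariant derivative and the minimality/constant-$S$ identities already recorded, I expect the mixed contraction $P$ to differ from $\sum h_{ijkl}^2$ by a curvature correction expressible through $A$ and $B$ and through the traces $f_3$, $f_4$, $S$. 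Concretely, commuting indices to turn $h_{jkli}$ into the fully symmetric ordering and tracking each curvature term should give $P$ as $\sum h_{ijkl}^2$ plus a combination of $A$, $B$, and lower-order trace quantities.

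At this stage both $\sum h_{ijkl}^2$ and the correction from $P$ contain the quantities $A$ and $B$, and the plan is to eliminate them using \eqref{ss}, namely $\sum h_{ijkl}^2 = S(S-n)(S-2n-3) + 3(A-2B)$. I would substitute this into $\sum u_{ijkl}^2 = \tfrac14\sum h_{ijkl}^2 + \tfrac34 P$; the $A$ and $B$ terms are engineered to cancel, leaving $\sum h_{ijkl}^2 - \sum u_{ijkl}^2 = \tfrac34(\sum h_{ijkl}^2 - P)$ as a pure polynomial in $S$, $f_3$, $f_4$, and $n$. The target coefficient $\tfrac32$ together with the combination $Sf_4 - f_3^2 - 2S^2 + nS$ then dictates exactly which curvature contractions must appear; matching these should complete the identity.

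The main obstacle I anticipate is the evaluation of the cross term $P$: commuting covariant derivatives of the second fundamental form produces several curvature terms, and one must carefully track the quadratic expressions $A$, $B$ and the traces $f_3^2$, $Sf_4$ that arise when the Gauss equation is inserted in the diagonalizing frame. In particular, reconciling the $A$--$B$ curvature corrections so that they cancel against those already present in \eqref{ss}, and correctly isolating the clean combination $Sf_4 - f_3^2$ (which encodes $\sum \lambda_i^2\lambda_j^2$ versus $(\sum\lambda_i^3)^2$-type contractions coming from the Gauss-curvature factor $h_{ik}h_{jl}-h_{il}h_{jk}$), is the delicate bookkeeping step on which the whole computation hinges.
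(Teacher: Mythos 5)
Your opening reduction is correct and coincides with the paper's first step: since $h_{ijk}$ is totally symmetric, $h_{ijkl}$ is symmetric in its first three indices, so after relabeling dummy indices all twelve cross terms in the square of \eqref{uijkl} agree, giving
\[
\sum_{i,j,k,l} u_{ijkl}^{2}
= \frac{1}{4}\sum_{i,j,k,l} h_{ijkl}^{2}
+ \frac{3}{4}\sum_{i,j,k,l} h_{ijkl}\,h_{ijlk},
\]
where your $P=\sum h_{ijkl}h_{jkli}$ equals $\sum h_{ijkl}h_{ijlk}$ by that same symmetry. The genuine gap is in how you propose to evaluate $P$. The Ricci identity \eqref{equ-ricci} for second covariant derivatives of a $2$-tensor produces \emph{only} zeroth-order terms (curvature contracted with $h$); no first derivatives $h_{ijk}$, hence no $A$ or $B$, can ever appear in the commutator — your anticipation that $P$ differs from $\sum h_{ijkl}^2$ ``by a curvature correction expressible through $A$ and $B$'' chases terms that do not exist. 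In the diagonal frame, \eqref{equ-ricci} and \eqref{equ-gauss} give
\[
h_{ijkl}-h_{ijlk}
= (\lambda_i-\lambda_j)(1+\lambda_i\lambda_j)
\bigl(\delta_{ik}\delta_{jl}-\delta_{il}\delta_{jk}\bigr),
\]
which vanishes unless $(k,l)=(i,j)$ or $(k,l)=(j,i)$, so
\[
\sum_{i,j,k,l} h_{ijkl}^{2}-P
= \sum_{i\neq j}\bigl[(\lambda_i-\lambda_j)(1+\lambda_i\lambda_j)\bigr]^{2}
= 2\bigl(Sf_{4}-f_{3}^{2}-2S^{2}+nS\bigr),
\]
the last equality by expanding and using only minimality, $\sum_i\lambda_i=0$. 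Multiplying by $\tfrac{3}{4}$ yields \eqref{ssu}. This is essentially the paper's proof, which packages the same computation as $\tfrac{3}{8}\sum(h_{ijkl}-h_{ijlk})^{2}$ over non-coincident index pairs (which vanishes) plus the coincident-pair contribution $\tfrac{3}{4}\sum_{i\neq j}(h_{ijij}-h_{jiji})^{2}$.

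Beyond being unnecessary, your planned appeal to \eqref{ss} fails on two concrete counts. First, it is algebraically incoherent as described: substituting \eqref{ss} into $\sum u^{2}=\tfrac14\sum h^{2}+\tfrac34 P$ would \emph{introduce} $3(A-2B)$ into the expression rather than cancel anything, and the identity you then state, $\sum h^{2}-\sum u^{2}=\tfrac34\bigl(\sum h^{2}-P\bigr)$, is trivially true independently of \eqref{ss} — so your plan never actually specifies a mechanism that evaluates $P$; the entire content of the lemma resides in the commutator computation above, which your sketch defers to ``careful bookkeeping.'' Second, \eqref{ss} is derived in the paper under the hypothesis that $S$ is constant, whereas Lemma \ref{new} is stated, and proved pointwise, for an arbitrary closed minimal hypersurface; routing the proof through \eqref{ss} would silently weaken the lemma by importing a constancy assumption it does not need.
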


\begin{proof}
Squaring equation \eqref{uijkl} and summing over all indices gives
\begin{align*}
    \sum_{i,j,k,l} u_{ijkl}^{2}
    &= \frac{1}{16}\sum_{i,j,k,l}
        \bigl(h_{ijkl} + h_{jkli} + h_{klij} + h_{lijk}\bigr)^{2} \\
    &= \frac{1}{4}\sum_{i,j,k,l} h_{ijkl}^{2}
       + \frac{3}{4}\sum_{i,j,k,l} h_{ijkl}\,h_{ijlk}.
\end{align*}
From the Ricci formula \eqref{equ-ricci}, we obtain
\[
    \frac{3}{4}\sum_{i\neq j}(h_{ijij}-h_{jiji})^{2}
    = \frac{3}{2}\bigl(Sf_{4}-f_{3}^{2}-2S^{2}+nS\bigr).
\]
Consequently,
\begin{align*}
    &\sum_{i,j,k,l} h_{ijkl}^{2}
      - \sum_{i,j,k,l} u_{ijkl}^{2}
      - \frac{3}{4}\sum_{i\neq j}(h_{ijij}-h_{jiji})^{2} \\
    =&\frac{3}{4}\sum_{i,j,k,l}h_{ijkl}^{2}-\frac{3}{4}\sum_{i,j,k,l}h_{ijkl}h_{ijlk}-\frac{3}{4}\sum_{i\neq j}(h_{ijij}-h_{jiji})^2 \\
    =&\frac{3}{8}\sum_{i,j,k,l}(h_{ijkl}^{2}+h_{ijlk}^{2})-\frac{3}{4}\sum_{i,j,k,l}h_{ijkl}h_{ijlk}-\frac{3}{4}\sum_{i\neq j}(h_{ijij}-h_{jiji})^2 \\
    =& \frac{3}{8}\sum_{i,j,k,l}(h_{ijkl}-h_{ijlk})^{2}
       - \frac{3}{4}\sum_{i\neq j}(h_{ijij}-h_{jiji})^{2}.
\end{align*}

A rearrangement of index pairs shows that
\[
    \sum_{i,j,k,l}(h_{ijkl}-h_{ijlk})^{2}
    = \sum_{\substack{(i,j)\neq(k,l)\\(i,j)\neq(l,k)}}(h_{ijkl}-h_{ijlk})^{2}
      + 2\sum_{i\neq j}(h_{ijij}-h_{jiji})^{2}.
\]
Hence
\begin{equation}\label{eqn}
    \sum_{i,j,k,l} h_{ijkl}^{2}
- \sum_{i,j,k,l} u_{ijkl}^{2}-\frac{3}{4}\sum_{i\neq j}(h_{ijij}-h_{jiji})^{2}
= \frac{3}{8}\sum_{\substack{(i,j)\neq(k,l)\\(i,j)\neq(l,k)}}(h_{ijkl}-h_{ijlk})^{2}.
\end{equation}

For all index pairs satisfying $(i,j)\neq(k,l)$ and $(i,j)\neq(l,k)$, the Gauss equation \eqref{equ-gauss} and the Ricci identity \eqref{equ-ricci} imply
\[
    h_{ijkl}-h_{ijlk}
    = (\lambda_i - \lambda_j)(1+\lambda_i\lambda_j)
      (\delta_{ik}\delta_{jl} - \delta_{il}\delta_{jk})
    = 0.
\]
Thus the right‑hand side of \eqref{eqn} vanishes, and the desired identity follows. 

\end{proof}

To simplify the computations below, we define
\[
    f := f_{4} - \frac{1}{S}f_{3}^{2} - \frac{S^{2}}{n}.
\]
In case that $M^n$ is a minimal Willmore hypersurface, $f_3$ is forced to be $0$, consequently
\begin{equation}\label{f}
    f = f_{4} - \frac{S^{2}}{n}.
\end{equation}

Set
\[
    t=\frac{S-n}{S},
\]
so that $0 < t < 1$.

\begin{lem}\label{ab}
Let $M^{n}$ be a closed minimal Willmore hypersurface of $\mathbb{S}^{n+1}$ with constant scalar curvature.  
Then there exists a point $x_{0}\in M^{n}$ at which 
\begin{equation}\label{AandB}
    \begin{aligned}
        A &= \left(\frac{2}{5}t + \frac{1}{5}\right) S f
           + \frac{3tS^{2}}{5(1-t)},\\
        B &= \left(\frac{1}{5}t - \frac{2}{5}\right) S f
           - \frac{tS^{2}}{5(1-t)}.
    \end{aligned}
\end{equation}
\end{lem}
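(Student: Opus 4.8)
The plan is first to read off \eqref{AandB} as the unique solution of a $2\times 2$ linear system, and then to locate a single point at which that system holds. Writing $f=f_4-\frac{S^2}{n}$ as in \eqref{f} and recalling $t=\frac{S-n}{S}$ (so that $1-t=\frac nS$ and $tS=S-n$), one verifies the purely algebraic identities
\[
    (S-n)f_4=tSf+\frac{tS^2}{1-t},\qquad Sf_4-S^2=Sf+\frac{tS^2}{1-t}.
\]
Consequently \eqref{AandB} is equivalent to the pair
\[
    2A+B=(S-n)f_4,\qquad A-2B=Sf_4-S^2 .
\]
The coefficient matrix $\left(\begin{smallmatrix}2&1\\1&-2\end{smallmatrix}\right)$ has determinant $-5$, which accounts for the factors $\tfrac15$ in the statement; hence it suffices to exhibit a point $x_0\in M^n$ at which both of these identities hold.

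For the first identity I would compute the Laplacian of $f_4=\operatorname{tr}(h^4)$. From $\nabla_k f_4=4\operatorname{tr}(h^3\,\nabla_k h)$ and the minimal identity $\Delta h_{ij}=(n-S)h_{ij}$, working in a frame diagonalizing $h$ gives
\[
    \tfrac12\Delta f_4=2(n-S)f_4+2(2A+B),
\]
since $\sum_k\operatorname{tr}\bigl((\nabla_k h^3)(\nabla_k h)\bigr)=2A+B$ directly from the definitions of $A$ and $B$. Equivalently $(2A+B)-(S-n)f_4=\tfrac14\Delta f_4$, so integrating over the closed manifold yields $\int_M\bigl[(2A+B)-(S-n)f_4\bigr]=0$; thus the first identity holds exactly on the (nonempty) zero set of $\Delta f_4$.

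For the second identity I would compare the two evaluations of $\sum_{i,j,k,l}h_{ijkl}^2$. Subtracting \eqref{ss} from the identity \eqref{ssu} of Lemma \ref{new} and imposing the Willmore constraint $f_3=0$ gives the pointwise relation
\[
    3(A-2B)=\sum_{i,j,k,l}u_{ijkl}^2+\tfrac32\bigl(Sf_4-2S^2+nS\bigr)-S(S-n)(S-2n-3),
\]
so that $A-2B=Sf_4-S^2$ is precisely the assertion $\sum_{i,j,k,l}u_{ijkl}^2=\tfrac32 Sf_4+S(S-n)(S-2n-3)-\tfrac32 nS$. I would control $\sum u_{ijkl}^2$ from below through its trace part: contracting \eqref{uijkl} and using $\sum_i h_{iikl}=\nabla_l\nabla_k(nH)=0$, $\sum_i h_{klii}=\Delta h_{kl}=(n-S)h_{kl}$, together with the first-order relations $\sum_i h_{iik}=\sum_i\lambda_i h_{iik}=\sum_i\lambda_i^2 h_{iik}=0$ (coming from $\nabla(nH)=0$, $\nabla S=0$ and $\nabla f_3=0$), every trace of $u_{ijkl}$ is expressed through $h$ and $S$, and a Cauchy--Schwarz estimate in the spirit of Yang--Cheng \cite{1} should furnish exactly the right-hand side above as a lower bound, with equality precisely when the trace-free part of $u$ vanishes.

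Finally I would take $x_0$ to be a maximum point of $f$ (equivalently of $f_4$): there $\nabla f=0$ gives the extra vanishing $\sum_i\lambda_i^3 h_{iik}=0$, which I expect to be exactly the condition saturating the Cauchy--Schwarz estimate of the previous step, so that $A-2B=Sf_4-S^2$ holds at $x_0$; the first identity is then to be recovered from $(2A+B)-(S-n)f_4=\tfrac14\Delta f_4$ together with the sign of $\Delta f_4$ at a maximum and the global constraint $\int_M\Delta f_4=0$, after which solving the $2\times2$ system reproduces \eqref{AandB}. The hard part is neither the linear algebra nor the identity \eqref{ssu}, but the sharp control of $\sum_{i,j,k,l}u_{ijkl}^2$: one must compute all traces of the symmetrized Hessian $u_{ijkl}$, use the Willmore identity $f_3=0$ and the three first-order constraints to show that the trace part alone produces the required value, and verify that both extremal conditions can be met simultaneously at the single point $x_0$. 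Managing the error terms in this estimate, rather than the elementary algebra, is where the real work lies.
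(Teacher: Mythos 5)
Your reduction to the linear system $2A+B=(S-n)f_4$, $A-2B=Sf_4-S^2$ is exactly right, and your treatment of the first equation matches the paper: from $\Delta f_4=-4(S-n)f_4+4(2A+B)$ and $\int_M\Delta f_4=0$, some point with $\Delta f_4=0$ gives $2A+B=(S-n)f_4$ there. The gap is in your handling of the second equation. In the paper, $A-2B=Sf_4-S^2$ is a \emph{pointwise} identity valid everywhere on $M^n$, obtained by a purely local computation: since $S$ is constant, its covariant Hessian vanishes, so
\begin{equation*}
    0=\frac{1}{2}\sum_{i,j,k}S_{ij}h_{jk}h_{ki},
\end{equation*}
and expanding this with the Ricci identity, $(f_3)_{ii}=0$ (from $f_3\equiv 0$), and the pointwise relation $\sum_{i,j,k}\lambda_i h_{ijk}^2=0$ of \eqref{C0-used} (itself a consequence of $\Delta f_3=0$, which you correctly listed among your first-order constraints) yields $0=S^2-Sf_4+(A-2B)$ directly. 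No extremal point, no estimate on $\sum u_{ijkl}^2$, and no equality case of any Cauchy--Schwarz inequality is needed; once the identity holds everywhere, $x_0$ can simply be any zero of $\Delta f_4$.

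Your proposed route is structurally backwards and would not close. You want to force $A-2B=Sf_4-S^2$ by showing that the Yang--Cheng-type lower bound on $\sum_{i,j,k,l}u_{ijkl}^2$ is \emph{saturated} at a maximum point of $f_4$, but that bound (it is \eqref{key-f3zero} in the paper) enters only later, in the proof of Theorem \ref{main}, precisely as a strict tool of inequality; there is no mechanism by which $\nabla f_4=0$ forces the trace-free part of $u_{ijkl}$ to vanish, and if the bound were an equality the pinching argument built on \eqref{equ-main} would collapse to a tautology. Moreover, even granting saturation at a maximum of $f_4$, your plan to recover the first identity at that same point fails on signs: at a maximum one only has $\Delta f_4\leqslant 0$, i.e.\ $2A+B\leqslant (S-n)f_4$, and the global constraint $\int_M\Delta f_4=0$ does not upgrade this to equality at that particular point unless $f_4$ is constant. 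So the two conditions you need cannot be co-located by your choice of $x_0$, whereas the paper's pointwise identity for $A-2B$ makes the co-location automatic.
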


\begin{proof}
The minimal Willmore condition forces $f_{3}=0$. From
\[
    \Delta f_{3}
    = -3(S-n)f_{3} + 6\sum_{i,j,k}\lambda_i h_{ijk}^{2},
\]
we immediately obtain
\begin{equation}\label{C0-used}
    \sum_{i,j,k} \lambda_i h_{ijk}^{2} = 0.
\end{equation}
A similar computation for $f_4$ gives
\[
    \Delta f_{4}
    = -4(S-n)f_{4} + 4(2A+B).
\]
Integrating $\Delta f_{4}$ over $M^{n}$ yields
\[
    \int_{M} -4(S-n)f_{4} + 4(2A+B) = 0.
\]
By continuity, there exists a point $x_{0}\in M^{n}$ such that at $x_0$, 
\[
    2A + B = tS f_{4}.
\]
Substituting \eqref{f} into the this equality, we obtain
\begin{equation}\label{2A+B}
    2A+B
    = tS f + \frac{tS^{2}}{1-t}.
\end{equation}

On the other hand, direct expansion shows
\begin{equation*}
	\begin{aligned}
		0&=\frac{1}{2}\sum_{i,j,k}S_{ij}h_{jk}h_{ki}\\
		&=\sum_{k}\lambda_{k}^{2}\Big(\sum_{i}h_{iikk}\lambda_{i}+\sum_{i,j}h_{ijk}^2\Big)\\
		&=\sum_{i,k}h_{iikk}\lambda_{k}^2\lambda_{i}+\sum_{i,j,k}\lambda_{k}^2h_{ijk}^2\\
		&=\sum_{i,k}\Big(h_{kkii}+(\lambda_{i}-\lambda_{k})(1+\lambda_{i}\lambda_{k})\Big)\lambda_{k}^2\lambda_{i}+A\\
		&=\sum_{i}\Big(\frac{1}{3}(f_{3})_{ii}-2\sum_{j,k}\lambda_{k}h_{ijk}^2\Big)\lambda_{i}+\sum_{i,k}(\lambda_{i}-\lambda_{k})(1+\lambda_{i}\lambda_{k})\lambda_{k}^2\lambda_{i}+A\\
		&=S^2-Sf_{4}+(A-2B).
	\end{aligned}
\end{equation*}
Hence, 
\begin{equation}\label{A-2B}
    A - 2B
    = S f_{4} - S^{2}
    = S f + \frac{tS^{2}}{1-t}.
\end{equation}

Solving the linear system formed by \eqref{2A+B} and \eqref{A-2B} gives precisely the expressions in \eqref{AandB}.

\end{proof}

\begin{lem}\label{imp}
    Let $M^{n}$ be a closed minimal Willmore hypersurface of $\mathbb{S}^{n+1}$ with constant scalar curvature.  
    Then at the point $x_{0}$ in Lemma~\ref{ab}, the inequality
    \begin{equation}\label{equ-f-upper}
        f \leqslant 
        \frac{t S}{(1-t)(3-4t)}
    \end{equation}
    holds for $0<t<\dfrac{3}{4}$.
\end{lem}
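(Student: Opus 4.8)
The plan is to extract from the two available expressions for $\sum_{i,j,k,l}h_{ijkl}^2$ a single clean identity for $\sum_{i,j,k,l}u_{ijkl}^2$, and then to play this identity (which is \emph{affine} in $f_4$) against a lower bound for $\sum u_{ijkl}^2$ that is \emph{super-linear} in $f_4$; the two together pin $f_4$ from above.

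First I would equate the identity \eqref{ssu} of Lemma~\ref{new} with \eqref{ss}, substitute $f_3=0$ and the pointwise identity \eqref{A-2B} (so $A-2B=Sf_4-S^2$), to obtain
\[
    \sum_{i,j,k,l}u_{ijkl}^2
    = S(S-n)(S-2n-3) + \tfrac32 S\,(f_4-n).
\]
This holds at every point, and in particular it is an affine function of $f_4$. Using the value of $A$ from \eqref{AandB} (equivalently $5A=(2t+1)Sf+\tfrac{3tS^2}{1-t}$ at $x_0$), a short computation shows that the target estimate \eqref{equ-f-upper} is, for $0<t<\tfrac34$, \emph{equivalent} to the single clean inequality
\[
    A \le \frac{2tS^2}{3-4t}
    \qquad\Longleftrightarrow\qquad
    f_4 \le \frac{3S}{3-4t}
    \qquad\text{at } x_0 .
\]
Thus everything reduces to an upper bound for $A$ (equivalently $f_4$) at $x_0$.

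The heart of the argument is a lower bound for $\sum u_{ijkl}^2$ that grows faster than the affine identity above. I would obtain it by bounding $\sum u^2$ below by the squared norm of the orthogonal projection of the totally symmetric tensor $u$ onto the subspace generated by the shape operator, i.e. spanned by the symmetrizations of $h\otimes h$ and $\mathrm{Id}\otimes h^2$. Concretely, Cauchy--Schwarz gives $\sum u_{ijkl}^2 \ge \Phi^2/N$, where $\Phi=\sum_{i,j,k,l}u_{ijkl}h_{ik}h_{jl}=\sum_{i,j}\lambda_i\lambda_j\,u_{iijj}$ and $N=\sum_{i,j,k,l}\big((h\otimes h)_{\mathrm{sym}}\big)_{ijkl}^2$. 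The contraction $\Phi$ is computed by commuting covariant derivatives through the Ricci formula \eqref{equ-ricci}, substituting the Gauss equation \eqref{equ-gauss}, and repeatedly using minimality, $f_3=0$ and $S=\text{const}$ — the same mechanism that produced \eqref{A-2B} and \eqref{2A+B}; a useful warm-up is the first trace $\sum_k u_{ijkk}=\tfrac{n-S}{2}h_{ij}$. One expects $\Phi$ to reduce to a combination of $A,B,S^2,Sf_4$ (hence, at $x_0$, to be linear in $f_4$), while $N$ is a polynomial in $S$ and $f_4$; thus $\Phi^2/N$ is \emph{quadratic} in $f_4$.

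Finally I would substitute the $x_0$-values \eqref{AandB} into both the affine identity and the lower bound $\sum u^2\ge\Phi^2/N$. Since the identity is affine and increasing in $f_4$ while the lower bound is an upward-opening quadratic, the inequality confines $f_4$ to a bounded interval; solving and keeping the admissible branch should give exactly $f_4\le\frac{3S}{3-4t}$, valid precisely when $3-4t>0$, i.e. $0<t<\tfrac34$, which unwinds to \eqref{equ-f-upper}. The main obstacle I anticipate is the computation in the previous paragraph: carrying the covariant-derivative bookkeeping for $\Phi=\sum_{i,j}\lambda_i\lambda_j u_{iijj}$ to a closed form in $A,B,S,f_4$, and then checking that the constants in $\Phi$ and $N$ conspire to produce \emph{exactly} the factor $3-4t$ and that the relevant root of the resulting quadratic is the claimed upper one. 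Ensuring the Cauchy--Schwarz step is sharp enough — i.e. that the choice of test tensors leaves no slack spoiling the constant — is the delicate point.
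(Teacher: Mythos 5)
Your preliminary reductions are correct but the engine of your argument is vacuous. The identity $\sum_{i,j,k,l}u_{ijkl}^2 = S(S-n)(S-2n-3)+\tfrac32 S(f_4-n)$ does follow from \eqref{ss}, Lemma~\ref{new}, $f_3=0$ and the pointwise relation \eqref{A-2B}, and the equivalence of \eqref{equ-f-upper} with $f_4\leqslant\frac{3S}{3-4t}$ at $x_0$ (via $S=\frac{n}{1-t}$) is also right. The problem is that, under the standing hypotheses, \emph{both} of your test tensors are orthogonal to $u$, so your Cauchy--Schwarz step yields only $\sum u^2\geqslant 0$. Indeed, $h_{ijkl}$ is symmetric in its first three indices, so $u_{ijij}=\tfrac12(h_{ijij}+h_{jiji})=\tfrac12(h_{iijj}+h_{jjii})$, and in the diagonalizing frame $\Phi=\sum_{i,j}\lambda_i\lambda_j u_{ijij}=\sum_{i,j}\lambda_i\lambda_j h_{iijj}$. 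Contracting $0=\tfrac12\nabla_l\nabla_k S=\sum_{i,j}h_{ij}h_{ijkl}+\sum_{i,j}h_{ijk}h_{ijl}$ with $h_{kl}$ gives $\sum_{i,j}\lambda_i\lambda_j h_{iijj}=-\sum_{i,j,k}\lambda_k h_{ijk}^2$, which vanishes pointwise by \eqref{C0-used} (a consequence of $f_3\equiv 0$). So $\Phi=0$ identically. The second test tensor fares no better: your own warm-up $\sum_k u_{ijkk}=\frac{n-S}{2}h_{ij}$ (which is correct, and by total symmetry of $u$ applies to any pair of traced indices) shows that the pairing of $u$ with the symmetrization of $\mathrm{Id}\otimes h^2$ equals $\frac{n-S}{2}f_3=0$. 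Hence no combination from your two-dimensional test space produces a nontrivial, let alone quadratic-in-$f_4$, lower bound, and the hoped-for factor $3-4t$ cannot emerge from this mechanism; the central computation you deferred is precisely where the plan collapses.

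The paper's proof is entirely different and essentially two lines: since $h_{ijk}$ is totally symmetric, $A+2B=\frac13\sum_{i,j,k}\bigl((\lambda_i+\lambda_j+\lambda_k)h_{ijk}\bigr)^2\geqslant 0$, while Lemma~\ref{ab} gives at $x_0$ the closed form $A+2B=\frac{4t-3}{5}Sf+\frac{tS^2}{5(1-t)}$; for $0<t<\frac34$ the coefficient of $f$ is negative, and solving for $f$ yields exactly \eqref{equ-f-upper}. Note that the quantity $\sum u_{ijkl}^2$ is never needed for this lemma at all --- it enters only afterwards, in the proof of Theorem~\ref{main}, through the Yang--Cheng lower bound \eqref{key-f3zero}. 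If you wished to salvage your scheme, you would need test tensors whose pairing with $u$ is not annihilated by the constraints $\nabla S=\nabla^2 S=0$ and $f_3=0$; as it stands, the proposal has a genuine gap and does not prove the lemma.
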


\begin{proof}
From Lemma~\ref{ab}, we obtain
\[
    A + 2B
    = \left(\frac{4}{5}t - \frac{3}{5}\right) S f
      + \frac{tS^{2}}{5(1-t)}.
\]
A direct expansion gives
\begin{equation*}
	\begin{aligned}
		0&\leqslant\frac{1}{3}\sum_{i,j,k}\big((\lambda_{i}+\lambda_{j}+\lambda_{k})h_{ijk}\big)^2\\
		&=\frac{1}{3}\sum_{i,j,k}(\lambda_{i}^2+\lambda_{j}^2+\lambda_{k}^2+2\lambda_{i}\lambda_{j}+2\lambda_{i}\lambda_{k}+2\lambda_{j}\lambda_{k})h_{ijk}^2\\
		&=A+2B.
	\end{aligned}
\end{equation*}
Hence,
\[
    \left(\frac{4}{5}t - \frac{3}{5}\right) S f
      + \frac{tS^{2}}{5(1-t)} \geqslant 0.
\]
When $0<t<\dfrac{3}{4}$, solving the inequality for $f$ yields precisely \eqref{equ-f-upper}.

\end{proof}




Now, we are ready to give a proof of Theorem \ref{main}.
\vspace{2mm}

\noindent
\textbf{\textit{Proof of Theorem \ref{main}.}}  ~~
We firstly establish the lower bound $$t \geqslant \frac{10 n+15-\sqrt{4 n^{2}+60 n+81}}{16n+24}.$$

Using the notation $t=\dfrac{S-n}{S}$, Yang and Cheng (\cite{1}) proved that if $f_{3}$ is constant, then 
\[
    \sum_{i,j,k,l}u_{ijkl}^2
    \geqslant 
    \frac{1}{2Sf}\left(Sf-2A+tf_{3}^2+\frac{2tS^2}{1-t}\right)^2
    +\frac{1}{S^2}\left(\sum_{i,j,k}\lambda_{i}h_{ijk}^2\right)^2
    +\frac{t^2 S^2}{2(1-t)}.
\]
Since $f_{3}=0$ for a minimal Willmore hypersurface, combining \eqref{C0-used}, this simplifies to
\begin{equation}\label{key-f3zero}
    \sum_{i,j,k,l}u_{ijkl}^2
    \geqslant 
    \frac{1}{2Sf}\left(Sf-2A+\frac{2tS^2}{1-t}\right)^2
    +\frac{t^2 S^2}{2(1-t)}.
\end{equation}

On the other hand, combining \eqref{ss} with Lemma~\ref{new}, we derive that
\begin{equation*}
\begin{aligned}
    \sum_{i,j,k,l}u_{ijkl}^{2}
    &= S(S-n)(S-2n-3) + 3(A-2B)
       -\frac{3}{2}\left(Sf_{4}-2S^{2}+nS\right)\\
    &= tS^{2}(S-2n-3) + 3(A-2B)
       -\frac{3}{2}\left(Sf+\frac{S}{n}(S-n)^{2}\right).
\end{aligned}
\end{equation*}
Consequently,
\begin{equation}\label{equ-main}
\begin{aligned}
    &tS^2(2tS-S-3)
    +3(A-2B)
    -\frac{3}{2}\left(Sf+\frac{t^2S^2}{1-t}\right) \\
    \geqslant&~
    \frac{1}{2Sf}\left(Sf-2A+\frac{2tS^2}{1-t}\right)^2
    +\frac{t^2 S^2}{2(1-t)}.
\end{aligned}
\end{equation}

Substituting the expressions \eqref{AandB} for $A$ and $B$ from Lemma~\ref{ab} into \eqref{equ-main}, we obtain
\begin{equation}\label{ineq-f}
    \left(8 t^{2}-12 t-33\right) f
    +\frac{8 S^{2} t^{2}}{(t-1)^{2} f}
    \leqslant
    \frac{\left(50 t^{2} S-75 t S+25 S-41 t+12\right)tS}{t-1}.
\end{equation}

Suppose that 
\[
    0 < t < \frac{10 n+15-\sqrt{4 n^{2}+60 n+81}}{16n+24}.
\]
Clearly, $0<t<\dfrac{3}{4}$, hence
\[
    8 t^{2}-12 t-33
    =\left(t-\frac{3}{4}-\frac{5\sqrt{3}}{4}\right)\!
     \left(t-\frac{3}{4}+\frac{5\sqrt{3}}{4}\right)
    <0.
\]
Notice that the coefficient of $f^{-1}$ in \eqref{ineq-f} is positive.  Using the upper bound in (\ref{equ-f-upper}), inequality \eqref{ineq-f} becomes
\[
\begin{aligned}
    &\left(8 t^{2}-12 t-33\right)
    \frac{t S}{(1-t)(3-4t)}
    +\frac{8 S^{2} t^{2}}{(t-1)^{2}}
     \frac{(1-t)(3-4t)}{t S}\\
     \leqslant
     &~\frac{\left(50 t^{2} S-75 t S+25 S-41 t+12\right)tS}{t-1}.
\end{aligned}
\]
Simplifying,
\[
\begin{aligned}
    \frac{t S (136 t^{2}-204 t+39)}{(1-t)(3-4 t)}
    \leqslant
    \frac{(50 t^{2} S-75 t S+25 S-41 t+12)tS}{t-1}.
\end{aligned}
\]
Substituting $S=\dfrac{n}{1-t}$ into the inequality above, we get a simpler form
\[
\begin{aligned}
    \frac{136 t^{2}-204 t+39}{(1-t)(3-4 t)}
    \leqslant
    \frac{(50 n+41)t-25 n-12}{1-t}.
\end{aligned}
\]
Because $(1-t)(3-4t)>0$, this is equivalent to
\[
    136 t^{2}-204 t+39
    \leqslant
    -\bigl(50 n t-25 n+41 t-12\bigr)(4 t-3),
\]
that is,
\[
    (8 n+12)t^{2}-(10 n+15)t+3n+3\leqslant 0.
\]
Solving the quadratic inequality gives
\[
    \frac{10 n+15-\sqrt{4 n^{2}+60 n+81}}{16n+24}
    \leqslant t
    \leqslant
    \frac{10 n+15+\sqrt{4 n^{2}+60 n+81}}{16n+24},
\]
which contradicts our assumption on $t$. Therefore, $$t\geqslant\dfrac{10 n+15-\sqrt{4 n^{2}+60 n+81}}{16n+24}.$$

Finally, using $t=\dfrac{S-n}{S}$, we obtain
\[
    S=\frac{n}{1-t}
    \geqslant 
    n+\frac{4n+9-\sqrt{4 n^{2}+60 n+81}}{2}
    =2n-T(n),
\]
where
\[
\begin{aligned}
    T(n)
    &=\frac{\sqrt{4 n^{2}+60 n+81}-2n-9}{2}
     = 3-\frac{18}{n}+\frac{135}{n^{2}}
      -\frac{2349}{2n^{3}}
      +\frac{44955}{4n^{4}}
      +O\!\left(\frac{1}{n^{5}}\right).
\end{aligned}
\]

This completes the proof of Theorem \ref{main}.
\hfill$\Box$

\section{Proof of Theorem \ref{f4}}

\begin{proof}
	Suppose that $0<t<\dfrac{1}{2}$. By Lemma \ref{imp}, we have
	\begin{equation*}
		f_{4}\leqslant\frac{tS}{(1-t)(3-4t)}+\frac{S}{1-t}=\frac{3S}{3-4t}.
	\end{equation*} 
	Combining this with either condition (1) or (2) in Theorem \ref{f4}, we deduce that $t\geqslant \dfrac{1}{2}$, which leads to a contradiction. 
	
	Hence, $t\geqslant\dfrac{1}{2}$ and consequently $S=\dfrac{n}{1-t}\geqslant 2n$.
	
\end{proof}

\vspace{3mm}

\section{Appendix}

In addition, to illustrate how the lower bound behaves from a more intuitive perspective, we conclude that:
\begin{enumerate}[label=$\bullet$, itemsep=-2pt, topsep=0pt]
    \item If $n\geqslant 5$,
        $S\geqslant n + 0.7068287800n$;
    \item If $n\geqslant 25$,
        $S\geqslant n + 0.9023458997n$;
    \item If $n\geqslant 100$,
        $S\geqslant n + 0.9716757246n$;
    \item If $n\geqslant 10000$,
        $S\geqslant n + 0.9997001798n$.
\end{enumerate}

\vspace{3mm}


\end{document}